\def\th@plain{\slshape}
\patchcmd{\th@remark}{\itshape}{\slshape}{}{}\makeatother
\newcounter{bidon}
\newcommand{\rdb}{\refstepcounter{bidon}}
\theoremstyle{plain}
\newtheorem{theorem}{Theorem}
\newtheorem*{theoremcinqtrois}{Theorem 6.3}
\newtheorem{lemma}[theorem]{Lemma}
\newtheorem{corollary}[theorem]{Corollary}
\theoremstyle{definition}
\newtheorem{remark}[theorem]{Remark}
\newcommand\ndsp{\textstyle}
\renewcommand \geq{\geqslant}
\newcommand\eti{^\times}
\newcommand \Ati {\gA^{\!\times}}
\newcommand \formu[1]{{\arraycolsep2pt\begin{array}{rcl} #1 \end{array}}}
\newcommand \aqo[2] {#1\sur{\gen{#2}}\!}
\newcommand \gen[1] {\left\langle{#1}\right\rangle}
\newcommand \so[1] {\left\{\,{#1}\, \right\}}
\newcommand \sotq[2]{\so{\,#1\,\vert\,#2\,}}
\newcommand \sur[1] {\!\left/#1\right.}
\renewcommand \geq{\geqslant}
\newcommand \som {\sum\nolimits}
\newcommand\Aq[2]{#1_{[#2]}}
\newcommand\Aqj[2]{#1_{\{#2\}}}
\newcommand \eoe {\hbox{}\nobreak\hfill
\vrule width .5em height .5em depth 0mm \par \smallskip}
\newcommand \NN{\mathbb {N}}
\newcommand \gA {\mathbf{A}}
\newcommand \gB {\mathbf{B}}
\newcommand \gC {\mathbf{C}}
\newcommand \gK {\mathbf{K}}
\newcommand \rhe {^{\mathrm{h}}}
\newcommand \rHe {^{\mathrm{H}}}
\newcommand \KHe {\gK\rHe}
\newcommand \gL {\mathbf{L}}
\newcommand \gV {\mathbf{V}}
\newcommand \Vhe {\gV\rhe}
\newcommand \VHe {\gV\rHe}
\newcommand \fmhe {\fm\rhe}
\newcommand \fmHe {\fm\rHe}
\newcommand \mhe {\fmhe}
\newcommand \mHe {\fmHe}
\newcommand \Ahe {\gA\!\rhe}
\newcommand \KV {(\gK,\gV)}
\newcommand \gW {\mathbf{W}}
\newcommand \Kac {{\gK^\mathrm{ac}}}
\newcommand \Vac {{\gV^\mathrm{ac}}}
\newcommand \Ann {\mathrm{Ann}}
\renewcommand \deg {\MA{\mathrm{deg}}}
\newcommand \Frac {\MA{\mathrm{Frac}}}
\newcommand \Ker {\MA{\mathrm{Ker}}}
\newcommand \pgcd {\MA{\mathrm{pgcd}}}
\newcommand \Rad {\MA{\mathrm{Rad}}}
\newcommand \Res {\mathrm{Res}}
\newcommand \tr {\MA{\mathrm{tr}}}
\newcommand\MA[1]{\mathop{#1}\nolimits}
\newcommand\fm{\mathfrak{m}}
\newcommand \AX {\gA[X]}
\newcommand \Ax {\gA[x]}
\newcommand \KX {\gK[X]}
\newcommand \Kx {\gK[x]}
\newcommand \VX {\gV[X]}
\newcommand \VT {\gV[T]}
\patchcmd{\sectionmark}{\MakeUppercase}{}{}{}
\begin{document}

\author[$\dag$]{María Emilia Alonso García}
\author[$\ddag$]{Henri Lombardi}
\author[$\ddag$]{Stefan Neuwirth}
\affil[$\dag$]{Departamento de Álgebra, Geometría y Topología, Facultad de Ciencias Matemáticas, Universidad Complutense Madrid, Spain, \url{mariemi@mat.ucm.es}.}
\affil[$\ddag$]{Université de Franche-Comté, CNRS, UMR 6623, LmB, 25000 Besançon, France, \url{henri.lombardi@univ-fcomte.fr}, \url{stefan.neuwirth@univ-fcomte.fr}.}

\title{Note on the coincidence of two henselisations}
\date{}
\maketitle

\begin{abstract} 
We compare two henselisations of a residually discrete valuation domain. Our constructive proof that a certain natural morphism is an isomorphism is also a proof in classical mathematics. Although this isomorphism is implicitly accepted as obvious in the literature, it seems that no proof was previously available.  
\end{abstract}

This note is written in Bishop's style of constructive mathematics \citep*{Bi67,BB85,BR1987,MRR,CACM,CCAPM}.

\medskip \noindent 
{\bf Terminology, notation.} 

\smallskip \noindent This paragraph clarifies the usual constructive terminology for a number of classical notions. From a constructive point of view, the development of good definitions\footnote{A ``good definition'' must have a clear constructive meaning, apply to cases usually dealt with in current literature, and be equivalent in classical mathematics to the most common definition. Naturally, this is not always a simple matter. Several ``good'' constructively inequivalent definitions can apply to a single classical notion and it is sometimes useful to consider them.} is a decisive part of the work which consists in extracting the constructive content of a concrete result obtained by means of a classical proof.

\smallskip \noindent A ring is \textsl{normal} when every principal ideal is integrally closed. 
In classical mathematics, this constructive definition is shown to be equivalent to the more usual definition according to which the ring becomes an integrally closed domain by localisation at any prime ideal. 

\smallskip \noindent A ring is \textsl{connected} if every idempotent is equal to~$0$ 
 or~$1$.

 \smallskip \noindent Let $\gK$ be a discrete field. A $\gK$-vector space~$E$ is \textsl{finite} over~$\gK$ (respectively \textsl{strictly} finite over~$\gK$) if it is a $\gK$-vector space of finite type (respectively if we know a finite basis of~$E$ over~$\gK$). More generally, if $\gA$ is a ring, an $\gA$-module~$E$ is \textsl{finite} over~$\gA$ if it is an $\gA$-module of finite type, and \textsl{strictly} finite over~$\gA$ if it is an $\gA$-projective module of finite type. 

 \smallskip \noindent A $\gK$-algebra~$\gL$ is \textsl{étale} if it is strictly finite and if the trace form is nondegenerate; equivalently, it is strictly finite and generated by separable elements over~$\gK$. More generally, if $\gA$ is a ring, an $\gA$-algebra~$\gB$ is \textsl{strictly} étale if it is strictly finite and if the trace form is nondegenerate. See for example \citet[Definitions~VI-1.1 and~VI-5.1, Theorems~VI-1.4, VI-1.7, VI-5.4, and~VI-5.5]{CACM}.

\smallskip \noindent The \textsl{Jacobson radical} of a ring~$\gA$, denoted by $\Rad(\gA)$, is the ideal formed by the~$x\in\gA$ such that $1+x\gA\subseteq \Ati$ (the multiplicative group of units).

\smallskip \noindent A \textsl{pp-ring} (or \textsl{quasi-integral ring}) is a ring in which the annihilator of any element is generated by an idempotent. A quasi-integral ring is normal if, and only if, it is integrally closed in its total ring of fractions. An integral domain is none other than a connected quasi-integral ring.

\smallskip \noindent Let $\gA$~be a \textsl{local ring}, i.e., for any~$x\in\gA$, $x$ or~$1-x$ is a unit. Its Jacobson radical $\Rad(\gA)$, also denoted by~$\fm$ or~$\fm_\gA$, is equal to the ideal formed by the nonunits. It is called a \textsl{Heyting field} if $\Rad(\gA)=\{0\}$. 
The residual field of~$\gA$  is the quotient ring~$\gA/\Rad(\gA)$; $\gA$~is \textsl{residually discrete} when the residual field is a discrete field; this is equivalent to having a test for ``$x\in\Ati$ or $x\in\Rad(\gA)$?'' in~$\gA$.

\smallskip\noindent A morphism in the category of local rings is a ring morphism $\varphi\colon\gA\to\gC$ which reflects units, i.e.\ such that
$\varphi(x)\in\gC\eti$ implies $x\in\Ati$. Such a morphism is sometimes called local, to distinguish it from mere ring morphisms.  
In the category of residually discrete local rings, the morphism $\varphi\colon\gA\to\gC$ is said to define~$\gC$ as an \textsl{$\gA$-local-algebra}.\footnote{``$\gA$-local-algebra'' must be read as a single word: it is an abbreviation in which the word ``local'' refers to both the object~$\gC$ and the morphism $\varphi$.}

\smallskip \noindent A \textsl{Hensel code} in a residually discrete local ring $(\gA,\fm)$ is a pair
$(f,a)\in \AX\times \gA$ with~$f$ monic, $f(a)\in\fm$ and $f'(a)\in\Ati$.
 
\smallskip \noindent For a residually discrete local ring~$\gA$ and a Hensel code $(f,a)$, \textsl{a Hensel zero of code $(f,a)$} in an $\gA$-local-algebra $\varphi\colon\gA\to\gC$ is an element $\alpha\in\gC$ which verifies $f(\alpha)=0$ and 
$\alpha-\varphi(a)\in\Rad(\gC)$ (we then have $f'(\alpha)\in\gC\eti$). This zero is necessarily unique. A \textsl{special polynomial} is a polynomial 
$g(X)=X^n-X^{n-1}+a_{n-2}X^{n-2}+\dots+a_0
$ with the~$a_k$'s in~$\fm$; in this case, $(g,1)$ is a Hensel code whose Hensel zero is \textsl{the special zero} of~\(g\). A polynomial $f\in\AX$ such that $(f,0)$ is a Hensel code is called a \textsl{Hensel polynomial}; in this case a Hensel zero of code $(f,0)$ is simply called a Hensel zero of~$f$ in~$\gC$.

\smallskip \noindent A residually discrete local ring~$\gA$ is said to be \textsl{henselian} if every Hensel polynomial has a Hensel zero 
in~$\gA$.

\smallskip \noindent A \textsl{discrete valued field}~$\KV$ is a discrete field~$\gK$ with a subring~$\gV$ which verifies: $xy=1\in\gK \implies x\in\gV$ or $y\in\gV$. 
In this case $\gV$ is a normal local ring. We also require that $\gV$ be residually discrete. This is equivalent to requiring that we have a test for ``$\exists x\in\gV$ such that $a=xb$?'', where $a,b\in\gK$.

\smallskip \noindent Let $\gA$ be an arbitrary commutative ring and $f\in\AX$ a monic polynomial; we shall use the notation \fbox{$\Aq \gA f=\aqo\AX f$} and~\fbox{$\Aqj \gA f=\Aq \gA f[1/f']$}.

\smallskip \citet[Theorem 6.3]{CL2016b} state the following helpful nontrivial result.

\begin{theoremcinqtrois}
  If $\gA$ is a normal ring and $f\in\AX$ a monic polynomial, then $\gA_{\{ f\}}$ is also a normal ring.
\end{theoremcinqtrois}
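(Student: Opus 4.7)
The plan is to view $\gA\to\gA_{\{f\}}$ as a \emph{standard étale} extension and descend normality from~$\gA$ via trace-form arguments. The intermediate object is the finite free $\gA$-algebra $\Aq\gA f=\aqo\AX f$ of rank $n=\deg f$, of which $\gA_{\{f\}}=\Aq\gA f[1/f'(x)]$ is a localisation.

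Two complementary presentations drive the argument. On the one hand, the trace form on $\Aq\gA f$ over~$\gA$ has discriminant $\disc(f)=\pm N(f'(x))$, which becomes ``generically'' nondegenerate after inverting $f'(x)$; geometrically, the singular locus $V(f')$ in $\Spec\Aq\gA f$ is removed by the localisation, leaving a smooth étale family over $\Spec\gA$. On the other hand, since $\gA$ is normal, the polynomial ring $\AX$ is normal (a standard constructive result in~\citet{CACM}) and so is its localisation $\AX[1/f']$; then $\gA_{\{f\}}\cong\aqo{\AX[1/f']}{f}$ displays $\gA_{\{f\}}$ as a quotient of a normal ring by a monic polynomial whose derivative is already a unit in the ambient ring.

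Given $y\in\gA_{\{f\}}$ satisfying a monic integral dependence $y^n+c_1 y^{n-1}+\dots+c_n=0$ with $c_k\in b^k\gA_{\{f\}}$, I would lift $y$, $b$, and the $c_k$ to $\AX[1/f']$. The relation then holds only modulo~$f$, and the objective is to convert this approximate dependence into an exact one. The invertibility of $f'$ enables a Hensel-style correction along~$f$; alternatively, the trace pairing on~$\Aq\gA f$ transports integrality over a principal ideal of $\gA_{\{f\}}$ down to integrality over the corresponding principal ideal of~$\gA$ (traces of integral elements are integral), after which the normality of~$\gA$ concludes.

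The main obstacle is precisely this last step. Classically one invokes that étale morphisms preserve Serre's criterion $R_1+S_2$, but that route is inaccessible constructively as it passes through prime ideals and local rings. The cleanest workaround is to first establish the auxiliary statement that a strictly étale extension of a normal ring is normal, and then to extend it to~$\gA_{\{f\}}$ — which is \emph{not} strictly finite over $\gA$ — through a careful interplay between the trace form of~$\Aq\gA f$ and the localisation at~$f'(x)$, using that $\disc(f)$ is the norm of the unit $f'(x)$ in the localised ring.
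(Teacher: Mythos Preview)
Your proposal is a plan rather than a proof: you explicitly flag the last step as ``the main obstacle'' and do not actually carry it out. Two of your three proposed routes are dead ends, and the third is missing its key lemma.

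The presentation $\gA_{\{f\}}\cong\aqo{\AX[1/f']}{f}$ buys nothing: normality is not preserved under quotients, even by a monic polynomial whose derivative is a unit (take $\gA$ a field and $f$ with a multiple root; or more pointedly, the whole content of the theorem is that \emph{this particular} quotient is normal). Your ``Hensel-style correction along~$f$'' is too vague to evaluate; there is no Hensel-type hypothesis available on~$f$. And the observation that $\disc(f)=\pm N(f'(x))$ does not make $\gA_{\{f\}}$ strictly étale over~$\gA$: inverting $f'(x)$ upstairs does not invert $\disc(f)$ downstairs, so the trace form of $\Aq\gA f/\gA$ need not become nondegenerate, and $\gA_{\{f\}}$ is genuinely not finite over~$\gA$.

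The paper's proof (cited from \citealt{CL2016b}, with the domain case reproved in Appendix~2) rests on one concrete identity you are missing, Tate's formula: writing $f(X)-f(Y)=(X-Y)\sum_j g_j(Y)X^j$, one has
\[
f'(x)\,b=\sum_{j=0}^{d-1}\tr\bigl(g_j(x)b\bigr)\,x^{j}\qquad\text{for all }b\in\Aq\gA f.
\]
If $b$ is integral over~$\gA$ then so is each $g_j(x)b$, hence each trace is integral over~$\gA$ and therefore lies in~$\gA$; so $f'(x)b\in\Aq\gA f$ and $b\in\Aqj\gA f$. This is exactly the ``traces of integral elements are integral'' step you gesture at, but made effective. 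It immediately gives the separable case (Corollary~\ref{corlemTate}). For general~$f$, the paper factors $f=gh$ over $\gK=\Frac\gA$ with $h$ separable (Lemma~\ref{lemLND1}); integral closedness of~$\gA$ forces $g,h\in\AX$, and one checks $\Aqj\gA f\simeq\Aqj\gA h[1/g]$, a localisation of a normal ring. This reduction to the separable factor is the second idea absent from your sketch.
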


\rdb\addcontentsline{toc}{subsection}{Presentation of the problem} \subsection*{Presentation of the problem}
 We answer a natural question that arises from the articles \citealt{KL00} and \citealt*{ALP08}. 

 \smallskip The henselisation of a residually discrete local ring $(\gA,\fm)$, denoted by $(\Ahe,\fmhe)$, is constructed in \citealt*{ALP08}. It solves the universal problem for the full subcategory of henselian residually discrete local rings in the category of residually discrete local rings. In other words the construction is functorial and the functor ``henselisation'' is the left adjoint functor to the inclusion functor.
 
\smallskip The henselisation of a discrete valued field $(\gK,\gV)$, denoted by $(\KHe,\VHe)$, is constructed in \citealt{KL00}. It solves the universal problem for the full subcategory of henselian discrete valued fields in the category of discrete valued fields. 

Since $\gV$ is a residually discrete local ring, it is natural to compare the two henselisations~$\Vhe$ and~$\VHe$. 

This problem does not seem to be addressed in the classical literature, perhaps because the answer seems obvious. But it has not been obvious to us, hence this note.
It also answers the problem from the point of view of classical mathematics, since our constructive definitions are equivalent to the usual definitions in classical mathematics.  

Given the universal property that characterises $(\Vhe,\mhe)$, and since $(\VHe,\mHe)$ is a residually discrete henselian local ring, we have a unique local morphism of $\gV$-algebras, viz.~\hbox{$\varphi\colon\Vhe\to \VHe$}. Our result is the following.

\begin{theorem}\label{thm1}
  Let $(\gK,\gV)$ be a discrete valued field with henselisation $(\KHe,\VHe)$. Let $(\Vhe,\mhe)$ be the henselisation of the residually discrete local ring $(\gV,\fm)$. The unique local morphism of $\gV$-algebras $\varphi\colon\Vhe\to \VHe$ is an isomorphism.
\end{theorem}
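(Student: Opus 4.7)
The plan is to construct an inverse $\psi\colon \VHe \to \Vhe$ by invoking the universal property of $(\KHe, \VHe)$. For this I need to equip $\Vhe$ with the structure of a residually discrete henselian discrete valued field receiving $(\gK, \gV)$; once this is done, the universal property from \citealt{KL00} supplies~$\psi$, and uniqueness in the two universal properties forces $\psi\circ\varphi=\Id_{\Vhe}$ and $\varphi\circ\psi=\Id_{\VHe}$.

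Using the construction of $\Vhe$ in \citealt{ALP08} as a filtered colimit of local étale $\gV$-algebras built from basic blocks of the form $\Aqj{\gV}{f}$ for Hensel codes $(f,a)$, I would verify first that $\Vhe$ is an integral domain whose fraction field $\gL\mathrel{:=}\Frac(\Vhe)$ is a separable algebraic extension of~$\gK$, and second that every element of $\Vhe$ is integral over~$\gV$ (since every generator introduced in the construction is a Hensel zero of a monic polynomial over~$\gV$). Since~$\gV$ is normal, Theorem~6.3 applied at each basic block and then transported to the filtered colimit shows that $\Vhe$ is normal as well; combined with being a domain, $\Vhe$ is integrally closed in~$\gL$, and so coincides with the integral closure of~$\gV$ in~$\gL$. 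Because $\Vhe$ is local and henselian, I would then argue that it is the valuation ring of the unique extension to~$\gL$ of the valuation of $(\gK, \gV)$; residual discreteness is preserved through the construction, and henselianity as a local ring implies henselianity as a valued field, so that $(\gL, \Vhe)$ becomes a residually discrete henselian discrete valued field extending~$(\gK,\gV)$.

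The main obstacle I anticipate is the constructive verification that $\Vhe$ is actually a valuation ring of~$\gL$, equivalently the decidability of ``$x\in\Vhe$ or $1/x\in\Vhe$'' for every $x\in\gL\setminus\{0\}$. Classically this is a one-line consequence of the uniqueness of extensions of valuations to algebraic extensions of henselian fields, but constructively I expect to have to argue stage by stage: represent~$x$ inside a finite étale level $\Aqj{\gV}{f}$ and apply the Hensel code structure together with the residual discreteness of~$\gV$ to make the decision explicitly. A related delicate point is confirming that residual discreteness of~$\Vhe$ is actually inherited from~$\gV$ through the colimit, though this should be built into the construction of \citealt{ALP08} by design.
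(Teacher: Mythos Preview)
Your approach is genuinely different from the paper's, and the ``main obstacle'' you flag is not a technicality but the actual gap.

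The paper does \emph{not} try to exhibit $(\Frac(\Vhe),\Vhe)$ as a henselian discrete valued field and then invoke the universal property of $(\KHe,\VHe)$ to produce an inverse. Instead it proves directly that $\varphi$ is injective and surjective, one elementary stage at a time. For injectivity of $\theta\colon\gV\!_{g_1}\to\VHe$, one takes $\alpha=p(\mu)$, computes the characteristic polynomial $q(T)\in\gV[T]$ of $p(x)$ in $\gK[x]=\Aq\gK{g_1}$, writes $q(T)=T^mq_1(T)$ with $q_1(0)\neq 0$, and uses that $\gV\!_{g_1}$ has no zerodivisors (this is where Theorem~6.3 enters) to conclude from $\alpha'=0$ and $q_1(\alpha)\neq 0$ that $\alpha=0$. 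For surjectivity, the key input is a Newton-polygon fact (Lemma~\ref{lemTrick2}): for $\eta\in\gK[\delta]\cap\VHe$ there is an exponent $m$ with $\delta^m\eta$ a $v$-isolated zero of some $Q\in\gV[X]$, hence already in some $\gV\!_u\subseteq\Vhe$; since $\delta^{-1}\in\Vhe$, this yields $\eta\in\Vhe$.

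Your plan, by contrast, must establish \emph{before} defining $\psi$ that $\Vhe$ is a valuation ring of its fraction field. Normality (which you do obtain from Theorem~6.3) is not enough: a local normal domain need not be a valuation ring. Proving the divisibility dichotomy ``$x\in\Vhe$ or $x^{-1}\in\Vhe$'' constructively, without already knowing $\varphi$ is an isomorphism, amounts to redoing at each stage precisely the Newton-polygon analysis the paper uses for surjectivity; your sketch ``apply the Hensel code structure together with residual discreteness'' does not supply that argument. There is also a smaller slip: your claim that every element of $\Vhe$ is integral over $\gV$ is not justified by ``every generator is a Hensel zero of a monic polynomial'', because the elementary step $\gV\!_f=S^{-1}\gV[x]$ is a genuine localisation and integrality of $x$ says nothing about $1/s$ for $s\in S$. (The claim is true \emph{a posteriori}, once $\Vhe\simeq\VHe$ is known, but that is circular here.)
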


\subsubsection*{The construction of the henselisation of a residually discrete local ring}

An elementary step in the construction of the henselisation~$\Ahe$ of a residually discrete local ring $(\gA,\fm)$ is as follows.
Let $f$ be a Hensel polynomial. 
Consider $\Ax=\Aq \gA f$, $S=1+\fm+\gen{x}$, and $\gA_f:=S^{-1}\Ax$. The ring $\gA_f$ is a residually discrete local ring with $\Rad(\gA_f)=\fm\gA_f$ and \(\frac x1\)~is a Hensel zero of~\(f\) in~$\gA_f$. The natural morphism $\gA\to\gA_f$ is local, faithfully flat, and we identify~$\gA$ with a subring of $\gA_f$. This morphism solves the universal problem for the Hensel code~\((f,0)\) in the category of $\gA$-algebras which are residually discrete local rings. 

We obtain the henselisation~$\Ahe$ as a filtered colimit of extensions of type \hbox{$\gA\to\gA_f$}. We have a filtered colimit because of the universal property satisfied by the morphism $\gA\to\gA_f$. We thus obtain~$\Ahe$ which is a residually discrete local ring of radical $\Rad(\Ahe)=\fm\Ahe$, and the morphism $\gA\to\Ahe$
yields the henselian residually discrete local ring ``generated by~$\gA$'' in the sense of the left adjoint functor to the inclusion functor in the category of residually discrete local rings.

\subsubsection*{Special polynomials}

The following lemma, directly inspired by \citet*[Lemma~5.3 and Proposition~5.4]{ALP08}, tells us that in the construction of the henselisation, we can restrict ourselves to the case where we add a special zero of a special polynomial. 
 
\smallskip 
\begin{lemma} \label{lemTrick1} Let $(\gA,\fm)$ be a residually discrete local ring 
and $f(X)=a_0+a_1X+\dots+a_nX^n\in\AX$ with $a_0\in\fm$ and $a_1\in\Ati$.
We define the special polynomial 
\[
\formu{  
    g(X)&=&X^n-X^{n-1}+a_{0}
\big(a_1^{-2}a_2X^{n-2}-a_0a_1^{-3}a_3X^{n-3}+\dots+(-1)^na_0^{n-2}a_1^{-n}a_n\bigr)
\\[.4em]
&=& X^n-X^{n-1}+a_{0}\ell(X) \quad \hbox{with } \ell(X)\in\AX
  } 
\]
and ${g_1}(X)=g(X+1)$: \(g_1\) is a Hensel polynomial with Hensel zero~\(\mu\in\fm \gA_{g_1}\) and the special zero of~\(g\) is~\(\delta=1+\mu\). The polynomial~$f$ has a zero $\gamma$ in $a_0\gA_{g_1}\subseteq \fm\gA_{g_1}\subseteq \fm\Ahe$, with~$f'(\gamma)$ a unit in $\gA_{g_1}$, and $\gamma$~is the unique zero of~$f$ in $\fm\gA_{g_1}$. If \(f\)~is monic, \(f\)~is a Hensel polynomial, \(\gA_f=\gA_{g_1}\), and \(\gamma\)~is the Hensel zero of~\(f\).
\end{lemma}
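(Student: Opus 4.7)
\emph{Key identity.} The proof is driven by the polynomial identity
\[
  f(-a_0 Y/a_1) \;=\; a_0\,Y^{n}\,g(1/Y)\qquad\text{in }\gA[Y],
\]
which one verifies by a coefficient-by-coefficient expansion: both sides equal $\sum_{j=0}^{n}(-1)^{j}a_0^{j}a_1^{-j}a_j Y^{j}$ (everything stays in~$\gA$ because $a_1\in\Ati$). This identity also explains the peculiar definition of~$g$, which is essentially the reciprocal of $a_0^{-1}f(-a_0Y/a_1)$. All sub-leading coefficients of~$g$ are multiples of $a_0\in\fm$, so $g$ is a special polynomial and $(g_1,0)$ is a Hensel code; the elementary step of the henselisation construction then yields $\gA_{g_1}$ together with a unique Hensel zero $\mu\in\fm\gA_{g_1}$ of $g_1$, hence $\delta=1+\mu\in 1+\fm\gA_{g_1}$ is the special zero of $g$. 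The inclusion $\fm\gA_{g_1}\subseteq\fm\Ahe$ is automatic since $\gA_{g_1}$ sits inside the filtered colimit defining $\Ahe$.

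\emph{The zero $\gamma$ of $f$.} Since $\delta$ is a unit, specialising the identity at $Y=1/\delta$ gives $f(-a_0/(a_1\delta))=a_0\delta^{-n}g(\delta)=0$, so $\gamma:=-a_0/(a_1\delta)\in a_0\gA_{g_1}$ is a zero of~$f$. As $\gamma\in\fm\gA_{g_1}$, the value $f'(\gamma)=a_1+\sum_{j\ge 2}ja_j\gamma^{j-1}\equiv a_1\pmod{\fm\gA_{g_1}}$ is a unit. For uniqueness, write $f(\gamma)-f(\gamma')=(\gamma-\gamma')\,q$ with $q=\sum_{j\ge 1}a_j\sum_{k=0}^{j-1}\gamma^{k}(\gamma')^{j-1-k}$; when $\gamma,\gamma'\in\fm\gA_{g_1}$, the $j=1$ term contributes $a_1$ while all others lie in $\fm\gA_{g_1}$, so $q$ is a unit and $\gamma=\gamma'$.

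\emph{The monic case: $\gA_f=\gA_{g_1}$.} When $f$ is monic, $(f,0)$ is itself a Hensel code and the elementary construction produces $\gA_f$ with a unique Hensel zero $\gamma_f\in\fm\gA_f$; the universal property of~$\gA_f$ then provides a local $\gA$-morphism $\gA_f\to\gA_{g_1}$ sending $\gamma_f\mapsto\gamma$. To construct the inverse, I need a special zero of~$g$ in~$\gA_f$; the natural candidate is
\[
  \delta_f \;:=\; 1 + \sum_{j=2}^{n}(a_j/a_1)\,\gamma_f^{\,j-1}\;\in\; 1+\fm\gA_f,
\]
which by an immediate rearrangement of $f(\gamma_f)=0$ satisfies $a_1\gamma_f\delta_f=-a_0$. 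Granted $g(\delta_f)=0$, the universal property of~$\gA_{g_1}$ supplies the required $\gA_{g_1}\to\gA_f$ mapping $\delta\mapsto\delta_f$, and uniqueness in both universal properties makes the composites equal to the identities.

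\emph{Main obstacle.} The real work is the verification $g(\delta_f)=0$ in~$\gA_f$. The polynomial identity specialised at $Y=1/\delta_f$ only yields $a_0\,g(\delta_f)=0$, which is insufficient because $a_0$ is not regular. Instead I substitute $a_0=-a_1\gamma_f\delta_f$ iteratively into $a_0\ell(\delta_f)$: the specific coefficients of~$\ell$ cause all signs and powers of~$a_1$ to telescope, leaving $a_0\ell(\delta_f)=-\delta_f^{\,n-1}(\delta_f-1)$, whence $g(\delta_f)=\delta_f^{\,n-1}(\delta_f-1)+a_0\ell(\delta_f)=0$. This collapse is the crux of the lemma and explains why $\ell$ is defined exactly as it is.
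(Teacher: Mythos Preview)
Your proof is correct and rests on the same key identity as the paper's (your $f(-a_0Y/a_1)=a_0Y^{n}g(1/Y)$ is the paper's $(*)$ after the substitution $Y=1/X$), and you construct $\gamma=-a_0/(a_1\delta)$ and argue $f'(\gamma)\in\gA_{g_1}^{\times}$ and uniqueness exactly as the paper does, only in more detail.

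Where you go beyond the paper is in the monic case $\gA_f=\gA_{g_1}$: the paper's proof simply does not address this clause (it is implicitly referred to \citet*{ALP08}). Your argument --- producing the candidate $\delta_f=1+\sum_{j\ge2}(a_j/a_1)\gamma_f^{\,j-1}$, using $a_0=-a_1\gamma_f\delta_f$ to collapse $a_0\ell(\delta_f)$ to $-\delta_f^{\,n-1}(\delta_f-1)$, and then invoking the two universal properties to obtain mutually inverse local $\gA$-morphisms --- is a clean and self-contained way to close this gap. The telescoping computation you flag as the ``main obstacle'' is indeed the point: the naive use of the identity only yields $a_0\,g(\delta_f)=0$, and your substitution of the \emph{relation} $a_0=-a_1\gamma_f\delta_f$ into the $a_0$-powers hidden in the coefficients of $\ell$ is what upgrades this to $g(\delta_f)=0$ without any regularity hypothesis on~$a_0$.
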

%
\begin{proof}
The following equality occurs in $\gA[X,1/X]$:
\[
  a_0g(X) = X^nf\Bigl(\frac{-a_0a_1^{-1}}X\Bigr).  \eqno (*)
\]
Note that $\delta\in
{\gA_{g_1}}\!\!\eti$.  
Let $\gamma=-a_0 (a_1\delta)^{-1}\in\fm\gA_{g_1}$. Applying~$(*)$ we get
$-a_0 g(\delta)=\delta^n f(\gamma)$, so $f(\gamma)=0$. Moreover $f'(\gamma)\in{\gA_{g_1}}\!\!\eti$ because $f'(0)\in\Ati$.

\noindent Uniqueness is straightforward without the need to assume that $f$ be monic.
\end{proof}

\subsubsection*{Equality to~\(0\) in the henselisation of a discrete valued field}

  Consider now the  discrete valued field $(\gK,\gV)$ and let \(\gA=\gV\). Consider the natural morphism $\theta\colon\gV\!_{g_1}\to\VHe$. Suppose $(\Kac,\Vac)$ is an extension 
of $(\gK,\gV)$ to an algebraic closure.\footnote{A constructive use of the purely ideal structure $(\Kac,\Vac)$ is obtained by considering the dynamic algebraic structure defined by adding the positive diagram of $(\gA,\fm)$ to the dynamic theory of algebraically closed discrete valued fields. This application of the dynamic method is explained in more detail in \citealt{KL00}.} The advantage of considering a special polynomial is that all its zeros in~$\Kac$, with the exception of the special zero, are in $\Rad(\Vac)=\fm\Vac$.\footnote{This last equality follows from \citealt{KL00}.} 

Consider \(\alpha=p(\mu)\in\gV\!_{g_1}\) for $p\in\VX$ and its image \(\alpha'=p(\mu')\), where~$\mu'$ is the Hensel zero of~$g_1$ in~$\VHe$.\footnote{This is not restrictive, as we want to test only equality to~$0$.} 
\citet[beginning of the proof of proposition 2.3]{KL00} explain how to test if $\alpha'=0$.  
We calculate the characteristic polynomial $q(T)\in\VT$ of~$p(x)$ in the $\gK$-algebra 
$\Kx=\Aq \gK {g_1}$.
 The zeros of~$q$ in an algebraic closure of~$\gK$ are the $\alpha'_i=p(\mu'_i)$ where the~$\mu'_i$ are the zeros of ${g_1}$ (let $\mu'_1=\mu'$ and $\alpha'_1=\alpha'$). If~$q(0)\neq 0$, $\alpha'\neq 0$ because $q(\alpha')= 0$.\footnote{Then $\alpha\neq 0$ because $\alpha=0$ implies $\alpha'=0$. Note that here, at this stage of the proof, $\alpha\neq 0$ simply means $\lnot(\alpha=0)$, as we do not yet know whether we have a zero test in~$\Vhe$.} If $q(0)=0$ we still don't know whether $\alpha'=0$. 
 We then calculate the characteristic polynomial~$r(T)\in\VT$ of $xp(x)$ in the $\gK$-algebra~$\Kx$. 
 The $\mu'_i$ for $i\geq 2$ are units, their valuation is zero.
 The zeros of~$r$ in~$\Kac$ are the $\mu'_i\alpha'_i$. 
 The valuation of the zeros of~$r$ is therefore the same as that of the zeros of~$q$, except in general the valuation of $\alpha'=\alpha'_1$ which increases by $v(\mu')$ when $v(\alpha')$ is finite, i.e.\ when $\alpha'\neq 0$. On the other hand $\infty+v(\mu')=\infty$ and we therefore obtain that $\alpha'=0$ if, and only if, the valuation of the zeros has not changed. The valuation of the zeros of a polynomial is given by Newton's polygon method. We therefore have an explicit test for~\hbox{$\alpha'=0$}.

 The Newton polygon of a polynomial $q\in\VX$ with respect to~\(v\) also provides information about its zeros in extensions. An ``isolated slope'' of this polygon corresponds to a \textsl{$v$-isolated} zero in~$\KHe$;
\citet[Proposition~2.2]{KL00} give a precise description of this zero which ends as follows. In summary, a zero $\alpha$ corresponding to an isolated slope of a Newton polygon can always be made explicit either 
as an element of~$\gK$, or in the form 
$(a\delta+b)/(c\delta+d)$, where $\delta$ is the special zero of a special 
polynomial~$g(X)$, $a,b,c,d\in V$, $(c\delta+d)\neq 0$ and $(ad-bc)\neq 0$.

\begin{remark}
  In the following we use the fact that, since $\gV$ is integrally closed, $\gV\!_{g_1}$ has no zerodivisor \citep[Theorem~6.3]{CL2016b}.\footnote{By Theorem 6.3, the ring $\Aqj{\gV\!} f$ is normal. $\gV\!_{g_1}$ is a localisation of $\Aqj{\gV\!} f$, so that it is also normal, and as it is local it has no zerodivisor.} In the end, we deduce that $\gV\!_{g_1}$ not only has no zerodivisor but is in fact integral. In the appendix, we give two independent constructive proofs of the fact that $\gV\!_{g_1}$ is integral and integrally closed. \eoe
\end{remark}

We shall also need the following observation.
\begin{lemma}\label{lemTrick2} If $\eta\in\gK[\delta]$,
where $\delta$ is a special zero of a special polynomial,
there exists an exponent~$m$ such that $\delta^m\eta$ is a $v$-isolated zero of a polynomial $Q\in\VX$.
\end{lemma}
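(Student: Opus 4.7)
The plan is to take $Q$ to be a $\gV$-multiple of the characteristic polynomial $P_m(T)\in\KT$ of multiplication by $\delta^m\eta$ in the $n$-dimensional $\gK$-algebra $\gK[\delta]\simeq\KX/\langle g\rangle$, where $g$ is the special polynomial of which $\delta$ is the special zero. Writing $\eta=p(\delta)$ with $p\in\KX$ of degree less than~$n$, and letting $\delta_1=\delta,\delta_2,\ldots,\delta_n$ denote the zeros of~$g$ in~$\Kac$, the roots of~$P_m$ in~$\Kac$ are exactly the $n$ elements $\delta_i^m p(\delta_i)$.

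The key input is the special-polynomial property recalled above: the special zero~$\delta$ satisfies $v(\delta)=0$, while every other zero of~$g$ lies in $\fm\Vac$ and therefore has strictly positive valuation. Set $\epsilon=\min_{i\geq 2}v(\delta_i)>0$, write $p=\sum_{j=0}^{n-1}c_jX^j$ with $c_j\in\gK$, and let $\nu=\min_j v(c_j)$. Since $v(\delta_i)\geq 0$ for every~$i$, we obtain the uniform bound $v(p(\delta_i))\geq \nu$, and hence $v(\delta_i^m p(\delta_i))\geq m\epsilon+\nu$ for each $i\geq 2$, whereas $v(\delta_1^m p(\delta_1))=v(\eta)$ is independent of~$m$. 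If $\eta=0$ the lemma is trivial (take $Q=X$); otherwise $v(\eta)$ is finite and I choose $m$ large enough that $m\epsilon+\nu>v(\eta)$, so that $\delta^m\eta$ becomes the unique root of~$P_m$ attaining the valuation $v(\eta)$.

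To finish, I multiply $P_m$ by a suitable nonzero element of~$\gV$ to clear denominators; this does not alter the roots and produces $Q\in\VX$. Its Newton polygon then exhibits a segment of horizontal length one corresponding to the valuation $v(\eta)$, which realises $\delta^m\eta$ as a $v$-isolated zero of~$Q$. The subtle point is really the choice of polynomial: using the characteristic polynomial in $\gK[\delta]$ rather than a minimal polynomial keeps the full list of ``conjugates'' $\delta_i^m p(\delta_i)$ in view, so that the special-polynomial property can be invoked to separate the valuation of $\delta^m\eta$ from those of all other conjugates once $m$ is large enough.
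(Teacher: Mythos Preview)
Your approach---computing the characteristic polynomial $P_m$ of $\delta^{m}\eta$ in $\gK[X]/\gen{g}$ and exploiting that the non-special zeros $\delta_2,\dots,\delta_n$ all lie in $\fm\Vac$---is precisely the idea behind the passage of \citealt{KL00} to which the paper refers, so the strategy is the intended one.

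There is, however, a genuine gap in the step ``choose $m$ large enough that $m\epsilon+\nu>v(\eta)$''. This silently assumes that the multiples of~$\epsilon$ are cofinal in the value group, i.e.\ that the valuation has rank one. In the paper, ``discrete valued field'' means only that $\gK$ is a discrete field; the value group may be non-Archimedean. For a concrete failure take $\gK=k((t))((s))$ (with $\mathrm{char}\,k\neq 2$), $v(s)=(1,0)$ and $v(t)=(0,1)$ in $\mathbb{Z}\times\mathbb{Z}$ ordered lexicographically, and $g(X)=X^{2}-X+t$. Here $g$ already splits in~$\gK$, $\epsilon=v(\delta_2)=(0,1)$, and with $p(X)=X+(s-\delta)\in\gK[X]$ one has $\nu=0$ while $\eta=p(\delta)=s$ has $v(\eta)=(1,0)$; thus $m\epsilon+\nu=(0,m)<(1,0)$ for every~$m$ and your inequality is never satisfied. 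Yet the two roots of $P_m$ have valuations $(1,0)$ and $(0,m)$, which are distinct for all~$m$, so the conclusion of the lemma does hold---only your bound fails.

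The repair is short and works in any value group. For each $i\geq 2$ the map
\[
m\;\longmapsto\; v\bigl(\delta_i^{\,m}p(\delta_i)\bigr)=m\,v(\delta_i)+v\bigl(p(\delta_i)\bigr)
\]
is strictly increasing in~$m$ (since $v(\delta_i)>0$), hence equals the fixed value $v(\eta)$ for at most one~$m$. Thus at most $n-1$ exponents are ``bad''; any $m\in\{0,1,\dots,n-1\}$ avoiding them makes $\delta^{m}\eta$ the unique root of~$P_m$ of valuation $v(\eta)$, hence $v$-isolated. This pigeonhole version also yields the a~priori bound $m<n$, which is what makes the procedure of \citealt{KL00} terminate constructively.
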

%
\begin{proof}
Result given in \citealt[proof of proposition 2.3]{KL00}.
\end{proof}

\rdb\addcontentsline{toc}{subsection}{Proof of Theorem~\ref{thm1}} \subsection*{Proof of Theorem~\ref{thm1}}

This paragraph proves the desired isomorphism. We keep the above notation.

\subsubsection*{Injectivity}

For injectivity, it is sufficient to prove it for an elementary step in the construction of~$\Vhe$ described in \citealt*{ALP08}.

In the present case with $\gA=\gV$ we obtain in particular a unique $\gV$-local morphism $\theta\colon\gV\!_{g_1}\to\VHe$. It sends the Hensel zero~$\mu$ of~$g_1$ in $\gV\!_{g_1}$ to the Hensel zero~$\mu'$ of~$g_1$ in~$\VHe$ and \(\alpha=p(\mu)\)~to~\(\alpha'=p(\mu')\).

We have to show that $\theta$ is injective, i.e.\ to show that \(\alpha'=0\implies\alpha=0\). 

If $q(0)\neq 0$ then $\alpha'\neq 0$ therefore $\lnot(\alpha= 0)$. Otherwise let's write $q(T)=T^mq_1(T)$ with $m> 0$ and $q_1(0)\neq 0$. If $\alpha'=0$, then $q_1(\alpha')\neq 0$. This implies $q_1(\alpha)\neq 0$, and as \hbox{$q(\alpha)=\alpha^mq_1(\alpha)=0$}, we get $\alpha=0$. Now we know that the natural morphism $\theta\colon\gV\!_{g_1}\to\VHe$ is injective and that $\gV\!_{g_1}$ itself is integral.   

\subsubsection*{Surjectivity}

In the following we can therefore identify any ring $\gV\!_f$ with its image in~$\Vhe$, and~$\Vhe$ with its image in~$\VHe$.
 To ensure that $\varphi$ is surjective, it suffices to show that for any Hensel zero $\mu\in\KHe$ of a polynomial~$g_1\in\VX$, any element $\eta$ of 
 \[\gW=\sotq{\eta \in\gK[\mu]}{v(\eta )\geq 0}\subseteq \VHe
 \] 
is in the image of~$\Vhe$.

Note that this implies that $\KHe$ is the field of fractions of~$\Vhe$. However, this does not tell us that $\VHe\subseteq \Vhe$. To obtain this inclusion we must use Lemma~\ref{lemTrick2}. It shows that $\delta^m\eta$ is in the image of a certain $\gV\!_u\subseteq\Vhe $. As $1/\delta$ is also in~$\Vhe$, this shows that $\eta\in\Vhe$. This completes the proof of the surjectivity of $\varphi$. 

We have completed the proof of the fact that the canonical morphism 
is an isomorphism. 
We now turn to the appendices. 


\rdb\addcontentsline{toc}{subsection}{Appendix 1} \subsection*{Appendix 1}

Here is another constructive proof, different from the one given in the above proof, for the fact that $\gV\!_f$ is an integrally closed ring.

\smallskip \citet*{ALN2021} consider the situation of a Hensel polynomial~$f$ for a residually discrete local ring $(\gA,\fm_\gA)$ when $\gA$ is integral. The assumption is that $(\gA,\fm_\gA)$ is an integral local ring dominated by a valuation ring~$\gV$ of the field of fractions~$\gK$ of~$\gA$ (so $ \fm_\gA=\gA\cap\fm_\gV$). We consider a step of the construction of each of the henselisations~$\Ahe$ and~$\VHe$.
We thus have a residually discrete local ring $\gA_f\subseteq \Ahe$ obtained by adding the Hensel zero~$\beta$ of~$f$ and a valuation ring~$\gW$ extension of~$\gV$ obtained by adding the Hensel zero~$\beta'$ of~$f$.

Precisely $\gW=\sotq{\zeta\in\gK[\beta']}{v(\zeta)\geq 0}$.\footnote{$\gK[\beta']$ is constructed as a discrete valued field according to a procedure specific to discrete valued fields. It is a finite separable extension of~$\gK$, but its dimension as a $\gK$-vector space is not generally known. A similar phenomenon occurs when we construct the real closure of a discrete ordered field.}
Consider the morphism $\theta_f\colon\gA_f\to \gW$ which extends the inclusion morphism $\gA\to \gV$. The paper shows that $\Ker(\theta_f)$ is a \textsl{minimal detachable prime ideal} of $\gA_f$.
We apply this result to the present situation where $\gA=\gV$ is self-dominated: the kernel $\Ker(\theta_f)$ of the natural morphism $\theta_f\colon\gV\!_f\to \gW$ is a minimal detachable prime ideal. 

\smallskip $\gV\!_{f}$ is a residually discrete local ring, localisation of~$\gV\!_{\{ f\}}$, which is normal by Theorem~6.3 because $\gV$ is a valuation domain. So $\gV\!_{f}$ is a local normal ring and therefore has no zerodivisor: this gives the result used in the previous proof. In other words the ideal $\{0\}$ of $\gV\!_{f}$ is a prime ideal (in the sense that the quotient ring has no zerodivisors). We conclude by saying that since $\Ker(\theta_f)$ is a minimal detachable prime ideal and $\{0\}$ is a prime ideal, $\{0\}$ is detachable and $\gV\!_{f}$ is integral.


\rdb\addcontentsline{toc}{subsection}{Appendix 2} \subsection*{Appendix 2}

We now give a third, more direct, proof for the fact that 
the ring $\gV\!_f\subseteq \Vhe$ is integrally closed. The calculations underlying the three proofs are undoubtedly more similar than is apparent at first glance.

In fact Lemma~\ref{lemLND2} below and the analogous Theorem 6.3 in \citealt{CL2016b}
are both based on the formula of~Tate\footnote{A classical result is that if a ring~$\gA$ is a normal domain (i.e.\ integrally closed),
the same applies to~$\AX$.
A difficulty from an algorithmic point of view is that the usual proof of the previous result no longer works when we assume only that $\gA$ has no zerodivisor and not that it is integral. This was the primary motivation for the article \citealt{CL2016b}.} in Lemma~\ref{lemTate}.

\begin{lemma}[Tate] \label{lemTate} 
Let $\gA$ be a commutative ring and $f\in \AX$ a monic polynomial of degree~$d$. 
Let us write \[f(X)=f(Y)+(X-Y)g(X,Y)\text{ with }g(X,Y)=\som_{j=0}^{d-1}g_j(Y)X^{j}\] and let us denote by~$\tr\colon\Aq\gA f\to\gA $ the trace form
of the extension $\Aq\gA f$. Then we have, for all $b\in\Aq\gA f=\Ax$,
\begin{equation}\label{eqnTate}
f'(x)b= \som_{j=0}^{d-1}\tr(g_j(x)b)\,x^{j}\text.  
\end{equation}
Note that $b,f'(x),g_j(x)\in \Aq\gA f$ and $\tr(g_j(x)b)\in \gA$.
\end{lemma}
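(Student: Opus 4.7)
The plan is to reduce the identity to the case $b\in\{1,x,\ldots,x^{d-1}\}$ via $\gA$-linearity of both sides, and then recognise each resulting equality as a formal polynomial identity in the coefficients $a_0,\ldots,a_{d-1}$ of~$f$. It will therefore be enough to treat the ``generic'' case $\gA=\mathbb{Z}[A_0,\ldots,A_{d-1}]$ with $f(X)=X^d+A_{d-1}X^{d-1}+\dots+A_0$; there $\disc(f)$ is a nonzero element of the integral domain~$\gA$, so after passing to a splitting extension of $\Frac(\gA)$ we may factor $f(X)=\prod_{k=1}^d(X-\alpha_k)$ with distinct roots~$\alpha_k$.

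In this setting I would rely on two elementary identities. Differentiating the factorisation gives $f'(X)=\sum_k g(X,\alpha_k)$, since $g(X,\alpha_k)=\prod_{l\neq k}(X-\alpha_l)$. Moreover $(X-\alpha_k)$ divides $(X^i-\alpha_k^i)$, and the defining identity of~$g$ specialised at $Y=\alpha_k$ reads $(X-\alpha_k)g(X,\alpha_k)=f(X)-f(\alpha_k)=f(X)$; hence $(X^i-\alpha_k^i)\,g(X,\alpha_k)$ is divisible by $f(X)$ for every~$k$. Summing over~$k$ yields the polynomial congruence
\[
  X^i f'(X)\,\equiv\,\sum_{k=1}^d \alpha_k^i\, g(X,\alpha_k)\pmod{f(X)}.
\]

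Expanding $g(X,\alpha_k)=\sum_j g_j(\alpha_k)X^j$ and using that the trace over the split étale extension is the sum of values at the roots, so that $\tr(x^i g_j(x))=\sum_k\alpha_k^i g_j(\alpha_k)$, the right-hand side of the congruence equals $\sum_j\tr(x^i g_j(x))\,X^j$. Reducing modulo~$f$, this is precisely \eqref{eqnTate} for $b=x^i$, which completes the argument by linearity in~$b$.

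The main obstacle will be making the universal reduction rigorous, especially in the constructive setting of the paper: one must verify that both sides of \eqref{eqnTate} (evaluated at $b=x^i$) are indeed given by universal polynomial formulas in $a_0,\ldots,a_{d-1}$, which follows by inspecting the matrix of multiplication-by-$a$ in the basis $(1,x,\ldots,x^{d-1})$ but deserves to be spelled out. Similarly, the ``splitting'' step should be interpreted as working in the universal splitting algebra of~$f$ rather than in a genuine algebraic closure, so as to stay within the constructive framework.
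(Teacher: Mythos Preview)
Your argument is correct and shares its core with the paper's proof: pass to a ring where $f$ splits, use $g(X,\alpha_k)=\prod_{l\neq k}(X-\alpha_l)$, and express the trace as the sum over the roots. The paper's version is, however, more direct on two counts. First, it works over the given ring~$\gA$ from the outset by computing inside the universal decomposition algebra $\gB=\gA[x_1,\dots,x_d]$ (precisely what you suggest in your last paragraph); the preliminary reduction to the generic ring $\mathbb{Z}[A_0,\dots,A_{d-1}]$ and the appeal to $\disc(f)\neq 0$ are unnecessary. Second, it treats an arbitrary $b=h(x)$ in one stroke rather than reducing to $b=x^i$: since $g$ is symmetric and $g(X,x_1)=\prod_{i\geq 2}(X-x_i)$, one has $g(x_1,x_i)=0$ for $i\neq 1$, so
\[
\sum_{i=1}^d g(x_1,x_i)h(x_i)=g(x_1,x_1)h(x_1)=f'(x_1)h(x_1),
\]
and expanding $g(x_1,x_i)=\sum_j g_j(x_i)x_1^{\,j}$ turns the left side into $\sum_j\tr(g_j(x)h(x))\,x_1^{\,j}$. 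This is your summation $\sum_k\alpha_k^i\,g(X,\alpha_k)$ reorganised, but it avoids both the case split on~$b$ and the meta-argument about universal polynomial identities in the coefficients of~$f$.
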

%
\begin{proof}
See \citet[Chapter VII, corollary page~74]{Raynaud70} or Appendix~3.
\end{proof}

Remark~\ref{note1} prepares the corollary which follows.
\begin{remark} \label{note1}
 Let $h\in \AX$ be a monic polynomial of degree~$d$ and $\Delta=uh+vh'=\pm\Res(h,h')$ be the discriminant of~$h$. Then~$h'$ is a unit in $\Aq\gA h$ if, and only if, $\Delta$ is a unit in~$\gA$ \citep[Basic elimination lemma III-7.5]{CACM}. Let us suppose that $\gA$ is an integral domain with field of fractions~$\gK$ and that $h$ is separable over~$\gK$.
We clearly have $\Aq\gA h \subseteq \Aq\gK h$ (respectively free over~$\gA$ and $\gK$ with basis $1,x,\dots,x^{d-1}$). Since~$h'$, $v$, and $\Delta$ are units in $\Aq\gK h$, they are regular in $\Aq\gA h$. We have $\Aq\gK h=\Aqj\gK h=\Aq\gK h[\frac 1 \Delta]$. Furthermore 
we have $\Aqj\gA h[\frac 1 v]=\Aq{\gA[\frac 1 \Delta]} h\subseteq \Aq\gK h$.
Thus the natural morphism $\Aq \gA h\to \Aq \gK h$ factorises into
\begin{equation} \label {eqnote1} \ndsp
\Aq \gA h \to \Aqj \gA h\to {\Aqj \gA h}[\frac 1 v]=\Aq{\gA[\frac 1 \Delta]} h\to  \Aq \gK h= \Aqj \gK h\text,
\end{equation}
where the morphisms are canonical injections which are localisation morphisms obtained by inverting regular monoids (successively ${h'}^\NN$,~$v^\NN$, and the nonzero elements of~\(\gA\)).\eoe
\end{remark}

\begin{corollary} \label{corlemTate}
Let $\gA$ be integrally closed and $\gK$ its field of fractions. Let $h$ be a separable monic polynomial of~$\AX$. Then 
$\Aqj \gA h$ is normal quasi-integral, with $\Aq \gK h=\Aqj \gK h$ as total ring of fractions. 
\end{corollary}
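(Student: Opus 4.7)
The plan is to combine the factorisation of Remark~\ref{note1} with Tate's formula~\eqref{eqnTate}. Since $h$ is separable over~$\gK$, the $\gK$-algebra $\Aq\gK h$ is étale, hence a zero-dimensional reduced ring---concretely a finite product of separable field extensions of~$\gK$. By Remark~\ref{note1}, $\Aqj\gA h$ injects into $\Aq\gK h$, and every element of $\Aq\gK h$ has the form $u/a$ with $u\in\Aq\gA h\subseteq\Aqj\gA h$ and $a\in\gA\setminus\{0\}$. As $\gA$ is a domain, such an $a$ is regular in the free $\gA$-module $\Aq\gA h$ and hence in $\Aqj\gA h$, while any regular element of $\Aqj\gA h$ is a unit in the zero-dimensional reduced ring $\Aq\gK h$ (where every element is either a zero-divisor or a unit). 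Consequently $\Aq\gK h$ is the total ring of fractions of $\Aqj\gA h$, and it coincides with $\Aqj\gK h$ by Remark~\ref{note1}.

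Next I would prove that $\Aqj\gA h$ is integrally closed in $\Aq\gK h$. Take $b\in\Aq\gK h$ satisfying a monic relation $b^n+c_1b^{n-1}+\dots+c_n=0$ with $c_i\in\Aqj\gA h$; writing $c_i=u_i/h'(x)^{k_i}$ with $u_i\in\Aq\gA h$ and multiplying through by $h'(x)^{nk}$ for $k=\max_i k_i$ shows that $y=h'(x)^k b$ is integral over $\Aq\gA h$, hence over~$\gA$. Applying Tate's formula \eqref{eqnTate} in the $\gK$-algebra $\Aq\gK h$ gives
\[
h'(x)\,y=\som_{j=0}^{d-1}\tr\bigl(g_j(x)\,y\bigr)\,x^j,
\]
where the trace is computed from $\Aq\gK h$ to~$\gK$. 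Each $g_j(x)\,y$ is a product of elements integral over~$\gA$, hence itself integral over~$\gA$. Because $\gA$ is integrally closed in~$\gK$, the characteristic polynomial of multiplication on $\Aq\gK h$ by an element integral over~$\gA$ has coefficients in~$\gA$ (its Galois conjugates are integral over~$\gA$ and their symmetric functions lie in $\gK\cap$\,integral closure of~$\gA$, i.e.\ in~$\gA$); in particular each of the traces above lies in~$\gA$. Therefore $h'(x)^{k+1}b=h'(x)\,y\in\Aq\gA h$, and so $b\in\Aqj\gA h$.

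Finally, I would deduce the pp-ring property. For $b\in\Aqj\gA h$, its annihilator in the product of fields $\Aq\gK h$ has the form $(1-e)\Aq\gK h$ for a unique idempotent~$e$. Since $e^2=e$, $e$~is trivially integral over $\Aqj\gA h$, and the preceding step forces $e\in\Aqj\gA h$. Injectivity of $\Aqj\gA h\hookrightarrow\Aq\gK h$ then yields $\Ann_{\Aqj\gA h}(b)=(1-e)\Aqj\gA h$, which is the pp-ring property. Combined with integral closure in the total ring of fractions, this is exactly the definition of normal quasi-integral. The main obstacle is the integral-closure step: one must clear denominators carefully so that Tate's formula reduces the problem to the integral-closedness of~$\gA$, after which quasi-integrality drops out for free because idempotents are trivially integral.
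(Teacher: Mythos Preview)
Your proof is correct and follows essentially the same route as the paper: inject $\Aqj\gA h$ into $\Aq\gK h$ via Remark~\ref{note1}, use Tate's formula~\eqref{eqnTate} over~$\gK$ to push an integral element~$b$ (or $h'(x)^k b$) into $\Aq\gA h$ by observing that the traces $\tr(g_j(x)b)$ lie in~$\gA$, and then recover quasi-integrality by pulling idempotents of $\Aq\gK h$ back via the integral-closure step. The differences are purely expository: you clear denominators explicitly before invoking Tate (the paper shows directly that anything integral over $\Aq\gA h$ lands in $\Aqj\gA h$ and tacitly localises), and you identify the total ring of fractions at the outset rather than at the end.
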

%
\begin{proof}
If $d=\deg h$, $\Aq \gA h$ is free of dimension~$d$ over~$\gA$ and $\Aq \gK h=\gK\otimes_\gA \Aq \gA h$ is an étale $\gK$-algebra of dimension~$d$.  
We therefore have $\Aq \gA h\subseteq \Aq \gK h$; then $\Aqj \gA h= \Aq \gA h[1/h']\subseteq \Aq \gK h= \Aqj \gK h$.  To show that $\Aqj \gA h$ is integrally closed, we start by showing that any element~$b\in \Aq \gK h$ integral over $\Aq \gA h$ is in  
$\Aqj \gA h$. Let $b\in \Aq \gK h$ be integral over $\Aq \gA h$. As $\Aq \gA h$ is integral over~$\gA$, $b$ is integral over~$\gA$. 
We have $h' b\in \Aq \gA h$ according to Lemma~\ref{lemTate} (Equation~\eqref{eqnTate} with~$h$ in place of~$f$ and $\gK$ in place of~$\gA$):
since $b$ is integral over~$\gA$ and~$h_j(x)\in\gA$ for each~$j$, $\tr(h_j(x)b)$ is integral over~$\gA$, therefore in~$\gA$. Consequently~\hbox{$h' b\in \Aq \gA h$} and $b\in \Aqj \gA h$.

\noindent In particular, since any idempotent of $\Aq \gK h$ is integral over any subring, it belongs to $\Aqj \gA h$. 
Let $c$ be an arbitrary element of $\Aqj \gA h\subseteq \Aq \gK h$.
If $e^2=e$ and $\gen{e}=\Ann(c)$ in $\Aq \gK h$, a fortiori $e^2=e$ and $\gen{e}=\Ann(c)$ in
$\Aqj \gA h$. So $\Aqj \gA h$ is quasi-integral and 
its total ring of fractions is $\Aq \gK h$. Thus $\Aqj \gA h$ is quasi-integral and integrally closed in its total ring of fractions, so it is normal.
\end{proof}
%

\begin{lemma} \label{lemLND0}
Let $f\in \AX$ be a monic polynomial. The pairs of complementary idempotents of $\Aq \gA f$ are in bijective correspondence with the factorisations~$f=gh$ with~$g,h$ monic and $\gen{g,h}=\gen{1}$. For such a factorisation we obtain a natural isomorphism $\Aq \gA f\simeq \Aq \gA g\times \Aq \gA h$. 
\end{lemma}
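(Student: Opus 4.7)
My plan has two directions. The forward one uses the Chinese Remainder Theorem directly: if $f=gh$ with $g,h\in\AX$ monic and $\gen{g,h}=\gen 1$, then $\gen g,\gen h$ are comaximal in $\AX$, so $\gen{gh}=\gen g\cap\gen h$, and the natural map $\Aq\gA f\to \Aq\gA g\times \Aq\gA h$ is a ring isomorphism; the complementary idempotents are the preimages of $(1,0)$ and $(0,1)$ in the product.

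The backward direction is the substantive one. Given $e^2=e$ in $M:=\Aq\gA f$, I view $M$ as $\gA$-free of rank $d=\deg f$ with basis $1,x,\dots,x^{d-1}$. Commutativity ensures that multiplication by $x$ respects the decomposition $M=eM\oplus(1-e)M$ into two finitely generated projective $\gA$-summands, and I define $g(T),h(T)\in\AT$ as the characteristic polynomials of the restrictions of this multiplication to $eM$ and $(1-e)M$. These are monic, and multiplicativity of the characteristic polynomial under direct sum gives $f=gh$. To identify $\Aq\gA g\simeq eM$ and $\Aq\gA h\simeq(1-e)M$ as rings, I use the $\gA$-algebra morphism $\gA[T]\to eM$, $T\mapsto xe$ (with $eM$ carrying $e$ as unit): it factors through $\Aq\gA g$ by Cayley--Hamilton, is surjective because $eM$ is generated as an $\gA$-algebra by $xe$, and is an isomorphism by a rank-count (both sides are projective $\gA$-modules of the same rank $\deg g$). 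Combining yields $\Aq\gA f\simeq \Aq\gA g\times\Aq\gA h$, whence the comaximality $\gen{g,h}=\gen 1$ in $\AX$: this isomorphism factors the diagonal map $\AX\to \Aq\gA g\times\Aq\gA h$ through the canonical surjection $\AX\to\Aq\gA f$, making the diagonal surjective, which is the CRT criterion. Bijectivity of the two constructions is then immediate by uniqueness of a compatible product decomposition: starting from $(g,h)$, the characteristic polynomials recovered on the CRT factors $\Aq\gA g$ and $\Aq\gA h$ are exactly $g$ and $h$.

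The delicate step, which I expect to be the main obstacle, is the principled handling of characteristic polynomials on the projective but possibly non-free summands $eM$ and $(1-e)M$. I would either invoke the extension of determinants and characteristic polynomials to finitely generated projective modules (where the degree becomes a locally constant function on $\Spec\gA$), or localise on a finite cover of $\Spec\gA$ that trivialises the summands, define $g$ and $h$ locally, and check that they patch into a global factorisation.
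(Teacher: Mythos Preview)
Your forward direction via the Chinese remainder theorem matches the paper. For the backward direction the paper takes a much shorter route: over a discrete field $\gK$ it simply sets $g(X)=\gcd(f(X),e(X))$ in $\gK[X]$, and for a general ring it refers to the literature, noting explicitly that only the field case will be used. Your characteristic-polynomial construction is a genuine alternative and works cleanly over any connected ring: the summands $eM$ and $(1-e)M$ then have constant rank, so $g$ and $h$ are monic of complementary degrees with $gh=f$; Cayley--Hamilton together with your rank count gives $\Aq\gA g\simeq eM$, and surjectivity of the diagonal recovers $\gen{g,h}=\gen 1$. Compared with the one-line $\gcd$, your argument is heavier but coordinate-free and does not rely on the Euclidean structure of $\gK[X]$.

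The concern you raise in your last paragraph is, however, a genuine obstruction rather than a technicality to be patched by localisation. If $\gA$ itself carries a nontrivial idempotent $\epsilon$, then $\epsilon\in\Aq\gA f$ is an idempotent whose summands have non-constant rank, and no \emph{monic} factorisation of $f$ can correspond to it (take $\gA=\mathbb Z/6\mathbb Z$, $f=X^2$, $e=3$: the only comaximal monic factorisations of $X^2$ are the trivial ones). So your localise-and-patch sketch cannot produce a global monic $g$ in that situation; the bijection as literally stated already needs $\gA$ connected. The paper sidesteps this by restricting to discrete fields, which is all that the subsequent Lemma~\ref{lemLND01} requires.
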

%
\begin{proof}
If $gu+hv=1$ in $\Aq \gA f=\Ax$, the factorisation corresponds to the idempotents~$e=gu$ and $1-e=hv$. If $\gA$ is a discrete field~$\gK$, an idempotent~$e(x)$ corresponds to the element~$g(x)$, where $g(X)=\pgcd(f(X),e(X))$ in~$\KX$.  
\citet*{ALP08} give a proof in the case of a ring. We will use this lemma with a discrete field only.
\end{proof}
%
\begin{lemma} \label{lemLND01}
Let $\gK$ be a discrete field, $f\in \KX$ a monic polynomial, and $b\in \Aq \gK f$. We have a factorisation~$f=gh$ as in Lemma~\ref{lemLND0} with~$b$ a unit in $\Aq \gK h$ and nilpotent in $\Aq \gK g$. 
\end{lemma}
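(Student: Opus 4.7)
The strategy I would follow is to extract from~$b$ a pair of complementary idempotents of $\Aq\gK f$ and then to invoke Lemma~\ref{lemLND0}. Since $\Aq\gK f$ is free of rank $\deg f$ over the discrete field~$\gK$, multiplication by~$b$ is a $\gK$-linear endomorphism whose characteristic polynomial $\chi(T)\in\gK[T]$ can be computed explicitly, and by Cayley--Hamilton it satisfies $\chi(b)=0$ in $\Aq\gK f$.

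First I would extract the multiplicity of~$0$ as a root of~$\chi$: because~$\gK$ is discrete, iterating the test ``$\chi(0)=0$?'' lets us write $\chi(T)=T^m\chi_1(T)$ with $m\geq 0$ and $\chi_1(0)\neq 0$. Since $\gcd(T^m,\chi_1)=1$ in $\gK[T]$, Bezout yields $u,v\in\gK[T]$ with $u(T)\,T^m+v(T)\,\chi_1(T)=1$. I then set $e_1=u(b)b^m$ and $e_2=v(b)\chi_1(b)$ in $\Aq\gK f$; the Bezout identity gives $e_1+e_2=1$ and Cayley--Hamilton gives $e_1e_2=u(b)v(b)\chi(b)=0$, so~$e_1$ and~$e_2$ are complementary idempotents.

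Lemma~\ref{lemLND0} then supplies the desired factorisation $f=gh$ with~$g,h$ monic and $\gen{g,h}=\gen{1}$, together with an isomorphism $\Aq\gK f\simeq\Aq\gK g\times\Aq\gK h$; I would label the factors so that~$e_2$ corresponds to $\Aq\gK g$ and~$e_1$ to~$\Aq\gK h$. On the first component, $b^m e_2=v(b)\chi(b)=0$ shows that~$b$ is nilpotent of index at most~$m$ in $\Aq\gK g$. On the second, $\chi_1(b)e_1=u(b)\chi(b)=0$ yields $\chi_1(b)=0$ in $\Aq\gK h$; writing $\chi_1(T)=T\chi_2(T)+\chi_1(0)$ with $\chi_1(0)\in\gK\eti$ exhibits $-\chi_2(b)/\chi_1(0)$ as an inverse of~$b$ in $\Aq\gK h$. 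I do not anticipate any real obstacle: the only constructive subtlety is the extraction of the integer~$m$, which is immediate over a discrete field.
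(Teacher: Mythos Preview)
Your proof is correct and follows the same two-step outline as the paper: produce complementary idempotents attached to~$b$, then invoke Lemma~\ref{lemLND0}. The only difference lies in how the idempotents are obtained. The paper appeals to the fact that the strictly finite $\gK$-algebra $\Aq\gK f$ is zero-dimensional, so that every element~$b$ yields an idempotent~$e$ with~$b$ a unit modulo~$1-e$ and nilpotent modulo~$e$; Lemma~\ref{lemLND0} is then applied directly. You instead make this explicit via Cayley--Hamilton: factor the characteristic polynomial as $\chi(T)=T^m\chi_1(T)$ with $\chi_1(0)\neq 0$ and use a B\'ezout relation between~$T^m$ and~$\chi_1$ to write down the idempotents $u(b)b^m$ and $v(b)\chi_1(b)$ by hand. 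Your route is more self-contained and elementary, effectively reproving the zero-dimensional splitting in this concrete situation; the paper's is a one-line citation of the standard structural fact.
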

%
\begin{proof}
The algebra $\gB:=\Aq \gK f$ is strictly finite zerodimensional. In a reduced zerodimensional ring, for any element~$b$ there is an idempotent~$e$ such that $\gen{b}=\gen{e}$, $b$ is a unit in $\gB[1/e]\simeq \aqo \gB {1-e} $ and nilpotent in $\aqo\gB e$. Apply Lemma~\ref{lemLND0}. 
\end{proof}
%

\begin{lemma} \label{lemLND1}
Let $\gK$ be a discrete field and $f\in \KX$ be a monic polynomial.\\
Then $\Aqj\gK f \simeq \Aqj\gK h=\Aq\gK h$
for a monic separable polynomial~$h$ which divides~$f$. 
In particular the $\gK$-algebra $\Aqj\gK f$ is étale. 
\end{lemma}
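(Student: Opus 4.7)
The plan is to apply Lemma~\ref{lemLND01} directly to the element $b=f'(x)\in\Aq\gK f$. This produces a factorisation $f=gh$ with $g,h\in\KX$ monic, $\gen{g,h}=\gen{1}$, such that $f'(x)$ is nilpotent in $\Aq\gK g$ and a unit in $\Aq\gK h$. By Lemma~\ref{lemLND0}, we obtain a natural isomorphism $\Aq\gK f\simeq\Aq\gK g\times\Aq\gK h$. Localising at the multiplicative monoid generated by~$f'(x)$ collapses the first factor to the zero ring (a nilpotent element has been inverted) and leaves the second factor unchanged (the element is already a unit), giving $\Aqj\gK f\simeq\Aq\gK h$.

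The next key step is to check that $h$ is separable. Differentiating $f=gh$ and reducing modulo~$h$ in $\Aq\gK h$ yields $f'(x)=g(x)h'(x)$; since $f'(x)$ is a unit in $\Aq\gK h$, both $g(x)$ and $h'(x)$ are units. By the basic elimination lemma alluded to in Remark~\ref{note1}, $h'(x)$ being a unit in $\Aq\gK h$ is equivalent to $\disc(h)$ being a unit of~$\gK$, i.e.\ to $h$ being separable monic. Since then $h'(x)$ is already a unit, $\Aqj\gK h=\Aq\gK h$, which completes the displayed chain of isomorphisms. The étaleness of $\Aqj\gK f$ follows immediately from the definition recalled in the introduction: $\Aq\gK h$ is strictly finite (free with basis $1,x,\dots,x^{\deg h-1}$) and generated over~$\gK$ by the separable element~$x$.

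The only mildly delicate point I anticipate is the compatibility of localisation at $f'(x)$ with the product decomposition of Lemma~\ref{lemLND0}. If $e, 1-e$ denote the complementary idempotents corresponding to the factorisation $f=gh$, one needs to observe that $f'(x)e$ is nilpotent in $\Aq\gK f \cdot e \simeq \Aq\gK g$ and that $f'(x)(1-e)$ is a unit in $\Aq\gK f \cdot (1-e) \simeq \Aq\gK h$; inverting $f'(x)$ therefore kills~$e$ and is redundant on $1-e$. This is routine once one tracks the idempotent picture, and it is the only place where one must translate carefully between the two descriptions of the decomposition.
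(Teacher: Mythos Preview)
Your proof is correct and follows essentially the same route as the paper's: apply Lemma~\ref{lemLND01} with $b=f'(x)$, use the product decomposition $\Aq\gK f\simeq\Aq\gK g\times\Aq\gK h$ so that inverting $f'$ kills the $g$-factor, and then verify that $h$ is separable via $f'\equiv gh'\pmod h$ being a unit. Your final paragraph on the idempotent bookkeeping is sound but unnecessary---localisation commutes with finite products, and that is all that is used.
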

%
\begin{proof}  
Apply Lemma~\ref{lemLND01} with $b=f'(x)$. We have~$f=gh$ in~$\KX$, $f'=gh'+hg'$, $\Aq\gK f\simeq \Aq\gK g\times \Aq\gK h$.
The $\gK$-algebra $\Aqj\gK f$ is isomorphic to $\Aq \gK h$ because $f'$ is a unit in $\Aq \gK h$ and nilpotent in $\Aq \gK g$. In the $\gK$-algebra $\Aq \gK h$, $g$ and $f'=h'g$ are units, so $h'$ is also a unit and $h$ is separable over~$\gK$. Thus $\Aqj\gK h=\Aq\gK h$ is an étale $\gK$-algebra.
\end{proof}

Note that $\Aqj\gK f$ is zero when $f$ divides a power of~$f'$ in~$\KX$.

\begin{lemma} \label{lemLND2}
Let $\gA$ be a normal domain and $f\in \AX$
a monic polynomial. Then $\Aqj \gA f$ is 
a normal quasi-integral ring. 
\end{lemma}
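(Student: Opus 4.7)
The plan is to reduce to the separable case handled by Corollary~\ref{corlemTate}. First I would apply Lemma~\ref{lemLND1} (equivalently, Lemma~\ref{lemLND01} with $b=f'(x)$) to obtain over $\KX$ a factorisation $f=gh$ with $g,h$ monic and comaximal, $h$ separable, $f'$ a unit in $\Aq\gK h$ and nilpotent in $\Aq\gK g$; in particular $\Aqj\gK f\simeq\Aq\gK h$.

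The first real step is to descend this factorisation to $\AX$. Since $\gA$ is a normal domain and $f\in\AX$ is monic, the coefficients of~$g$ (respectively~$h$), expressible as symmetric functions of the roots of~$g$ (respectively~$h$) in a splitting field of~$f$ over~$\gK$, are integral over~$\gA$; being in~$\gK$ they lie in~$\gA$ by normality. Hence $g,h\in\AX$ with $h\mid f$ in $\AX$, so $\Aq\gA f/\gen{h(x)}=\Aq\gA h$.

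Next, since $\Aq\gA f$ is $\gA$-free, it embeds into $\Aq\gK f$, and localising at~$f'$ yields an injection $\Aqj\gA f\hookrightarrow\Aqj\gK f\simeq\Aq\gK h$. In the target $h(x)=0$, so the element $g'(x)h(x)/f'(x)\in\Aqj\gA f$ maps to zero and is therefore zero; multiplying by the unit $f'(x)$ gives $g'(x)h(x)=0$ in $\Aqj\gA f$, whence $f'(x)=g(x)h'(x)$ there, so $g(x)$ is a unit; combined with $g(x)h(x)=f(x)=0$ this forces $h(x)=0$ in $\Aqj\gA f$. Consequently
\[
  \Aqj\gA f \;\simeq\; \Aq\gA h\bigl[1/f'(x)\bigr] \;=\; \Aq\gA h\bigl[1/(g(x)h'(x))\bigr] \;=\; \Aqj\gA h\bigl[1/g(x)\bigr].
\]
Corollary~\ref{corlemTate} applied to the separable monic $h\in\AX$ tells us that $\Aqj\gA h$ is normal quasi-integral, and these properties pass to the further localisation at~$g(x)$.

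The main obstacle I expect is the Gauss-type descent $g,h\in\AX$: one has to combine the normality of~$\gA$ with a splitting-field construction (or work directly inside the étale extension $\Aq\gK h$) to conclude that the coefficients produced by Lemma~\ref{lemLND01}, a~priori only in~$\gK$, in fact lie in~$\gA$. The remaining bookkeeping — transferring $h(x)=0$ from $\Aq\gK h$ back to $\Aqj\gA f$, and invoking that normality and quasi-integrality are preserved by localisation — is routine.
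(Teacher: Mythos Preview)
Your argument is correct and follows the paper's route almost verbatim: factor $f=gh$ over~$\gK$ via Lemma~\ref{lemLND01}/\ref{lemLND1}, descend $g,h$ to $\AX$ by normality of~$\gA$, show $h(x)=0$ in $\Aqj\gA f$ so that $\Aqj\gA f\simeq\Aqj\gA h[1/g]$, and finish with Corollary~\ref{corlemTate}. The only divergence is in the middle step: the paper observes directly that $g$ (monic) divides a power of~$f'$ in $\KX$, hence in $\AX$, so $g$ is a unit in $\Aqj\gA f$; you instead pass through the injection $\Aqj\gA f\hookrightarrow\Aqj\gK f$, which also works but is slightly longer---note that once you have this injection you can skip the detour through $g'(x)h(x)/f'(x)$ and conclude $h(x)=0$ immediately, since $h(x)$ itself maps to~$0$ in $\Aqj\gK f\simeq\Aq\gK h$.
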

%
\begin{proof} 
Let $\gK=\Frac\gA$
.  
We apply Lemma~\ref{lemLND01} with $b=f'$ and Lemma~\ref{lemLND1}, we write {$f=gh$} with monic polynomials~$g$ and~$h$ in~$\KX$. We have $g,h\in \AX$ because $\gA$ is integrally closed.
Moreover $g$ monic divides a power of~$f'$ in~$\KX$ so in~$\AX$ (the division takes place entirely in~$\AX$). So $g$ is a unit in $\Aqj\gA f$, which implies~$h=0$ in $\Aqj\gA f$ because $gh=f=0$. 
We therefore obtain 
\[\ndsp
\Aqj\gA f=\Aq\gA{gh}[\frac 1{f'g}]\simeq \Aq\gA{gh}[\frac 1{g}][\frac 1{h'}]\simeq \Aq\gA{h}[\frac 1{g}][\frac 1{h'}]=\Aqj\gA h[1/g]\text.
\]
Relative to the natural inclusion $\Aq \gA h\to \Aq \gK h$, the elements~$h'$, $f'$, and~$g$, units in $\Aq \gK h$, are regular in $\Aq \gA h$. We therefore obtain the canonical injective morphisms of localisation at regular monoids shown below:
\[\Aq \gA h \to\Aqj \gA h \to \Aqj\gA f
\to   \Aq\gK h\simeq \Aqj\gK f.
\]

\noindent 
We conclude that $\Aqj\gA h$ is normal quasi-integral, and therefore also its localisation~$\Aqj\gA f\simeq\Aqj \gA h[1/g]$.
\end{proof}

\begin{lemma} \label{lemLND3}
Let $\gA$ be an integrally closed residually discrete local ring and $f\in \AX$
a monic polynomial. Then $\gA_f $ is an integrally closed residually discrete local ring. 
\end{lemma}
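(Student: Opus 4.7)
The plan is to realise $\gA_f$ as a localisation of $\gA_{\{f\}}=\Aqj\gA f$ and then apply Lemma~\ref{lemLND2}, which does the heavy lifting. Since $\gA$ is an integrally closed residually discrete local ring (hence a normal domain), Lemma~\ref{lemLND2} directly gives that $\gA_{\{f\}}$ is normal and quasi-integral.

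Next I would verify that $f'(x)$ is already invertible in $\gA_f$, so that the natural morphism $\gA_{\{f\}}\to\gA_f$ exhibits $\gA_f$ as a further localisation of $\gA_{\{f\}}$. This uses the Hensel-polynomial hypothesis on~$f$: with $S=1+\fm+\gen x$, the element $x$ lies in $\Rad(\gA_f)=\fm\gA_f$ since $1+tx\in S$ for every $t\in\gA_f$, and writing $f'(X)=f'(0)+X\,h(X)$ with $f'(0)\in\Ati$ gives $f'(x)\equiv f'(0)\pmod{\fm\gA_f}$, hence $f'(x)\in\gA_f^\times$. Consequently $\gA_f$ is obtained from $\gA_{\{f\}}$ by inverting the image of~$S$.

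Normality and the quasi-integral property both descend to localisations (a routine check: if $\Ann_R(a)=\gen e$ with $e^2=e$ then the same~$e$ generates $\Ann_{S^{-1}R}(a/1)$, and the integral closure of a principal ideal passes to any localisation), so $\gA_f$ is normal and quasi-integral. Since $\gA_f$ is local it is connected, and a connected quasi-integral ring is an integral domain by the terminology recalled at the start of the paper; a normal integral domain is integrally closed in its field of fractions, which is the claim.

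The main obstacle, such as it is, is the identification of $\gA_f$ as a localisation of $\gA_{\{f\}}$: everything hinges on the fact that $f$ being a Hensel polynomial forces $f'(x)$ to lift the unit $f'(0)$ modulo the Jacobson radical. Once this is granted, Lemma~\ref{lemLND2} together with the stability of normality and quasi-integrality under localisation, combined with locality, closes the argument automatically.
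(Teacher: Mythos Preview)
Your argument is correct and follows the same route as the paper's proof: invoke Lemma~\ref{lemLND2} to see that $\Aqj\gA f$ is normal quasi-integral, observe that $\gA_f$ is a localisation of it, and conclude by noting that a local ring is connected. The paper's proof is terser and simply asserts that $\gA_f$ is a localisation of $\Aqj\gA f$; you supply the missing justification by showing $f'(x)\in\gA_f^{\times}$ via the Hensel-polynomial condition, which is a welcome clarification (and tacitly uses that the lemma, despite saying only ``monic'', is meant for Hensel polynomials~$f$, since $\gA_f$ is defined in that setting). One cosmetic point: when you write ``$1+tx\in S$ for every $t\in\gA_f$'', the element $1+tx$ lives in $\gA_f$, not in $\gA[x]$ where $S$ sits; what you mean (and what is true) is that $1+tx$ is a unit of $\gA_f$, since for $t=a/s$ one has $1+tx=(s+ax)/s$ with $s+ax\in S$.
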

%
\begin{proof}
We know that $\gA_f $ is a residually discrete local ring.  As a localisation of~$\Aqj \gA f$, it is a normal quasi-integral ring.  And every local ring is connected.
\end{proof}
%


\rdb\addcontentsline{toc}{subsection}{Appendix 3} \subsection*{Appendix 3}

In this appendix, we recall a proof of Tate's lemma given in
\citealt{CL2016b} which is simpler than the one of \citet[Chapter~VII, §1]{Raynaud70}.
\begin{proof}[Proof of Lemma~{\ref{lemTate}}] We have $\Aq\gA f=\Ax=\aqo\AX f$.
Let $\gB=\gA[x_1,\dots,x_d]$ be the universal decomposition algebra of~$\gA$ for~$f$, where $f(X)=\prod_{i=1}^d(X-x_i)$ \citep[Definition III-4.1]{CACM}. Each $\gA[x_i]\subseteq \gB$ is isomorphic to $\gA[x]$ and we can take $x=x_1$.  
We write $b=h(x) \in \Aq\gA f$. For all $\ell(x)\in \Aq\gA f$ we have $\tr (\ell(x))=\som_{i=1}^d\ell(x_i)\in\gA$.\footnote{See for example \citet[Lemma III-5.12]{CACM}.} We have $g(Y,Y)=f'(Y)$ and $g(x_1,x_i)=g(x_i,x_1)=0$ if $i\neq 1$: in fact $g(X,x_1)=\prod_{i=2}^d(X-x_i)$ because $(X-x_1)g(X,x_1)=f(X)-f(x_1)=f(X)=\prod_{i=1}^d(X-x_i)$.
So 
\[f'(x_1)b=g(x_1,x_1)h(x_1)=\som_{j=0}^{d-1} g_j(x_1)h(x_1)x_1^j\]
and for $i\neq 1$ 
\[0=g(x_1,x_i)h(x_i)=\som_{j=0}^{d-1} g_j(x_i)h(x_i)x_1^j\text.\]
Summation gives the Tate formula announced.
\end{proof}
%

\bibliographystyle{plainnat}
\bibliography{Note2Hbib}

\end{document}